\undefined \DeclareGraphicsRule{*}{eps}{*}{} \else
\newtheorem{theorem}{Theorem}[section]
\newtheorem{lemma}[theorem]{Lemma}
\newtheorem{cor}[theorem]{Corollary}
\newtheorem{prop}[theorem]{Proposition}
\newtheorem{conj}[theorem]{Conjecture}
\newtheorem{remark}[theorem]{Remark}
\newcommand{\F}{{\mathbb F}}
\begin{document}

\title{{Existence of Erd\H{o}s-Burgess constant in commutative rings}}
\author{
Guoqing Wang\\
\small{School of Mathematics Science, Tiangong University, Tianjin, 300387, P. R. China}\\
\small{Email: gqwang1979@aliyun.com}
\\
}
\date{}
\maketitle

\begin{abstract}
Let $R$ be a commutative unitary ring. An idempotent in $R$ is an element $e\in R$ with $e^2=e$.  The Erd\H{o}s-Burgess constant associated with the ring $R$ is the smallest positive integer $\ell$ (if exists) such that for any given $\ell$ elements (not necessarily distinct) of $R$, say $a_1,\ldots,a_{\ell}\in R$, there must exist a nonempty subset $J\subset \{1,2,\ldots,\ell\}$ with $\prod\limits_{j\in J} a_j$ being an idempotent. In this paper, we prove that except for an infinite commutative ring with a very special form, the Erd\H{o}s-Burgess constant of the ring $R$ exists if and only if $R$ is finite.
\end{abstract}

\noindent{\small {\bf Key Words}: {\sl  Zero-sum; Davenport constant; Erd\H{o}s-Burgess constant; Idempotents; Jacobson radical; Noetherian rings; Multiplicative semigroups of rings}}

\noindent{\small {\bf MSC}:  11B75; 05E40; 20M13}

\section {Introduction}

Let $\mathcal{S}$ be a nonempty commutative semigroup, endowed with a binary associative operation $*$. Let ${\rm E}(\mathcal{S})$ be the set of idempotents of $\mathcal{S}$, where $e\in \mathcal{S}$ is said to be an idempotent if $e*e=e$.
Idempotent is one of central notions in Semigroup Theory and Algebra,
also connects closely with other fields, see \cite{Cohen, Green} for the idempotent theorem in harmonic analysis, see \cite{Lint} for the application in coding theory.
One of our interest to combinatorial properties concerning idempotents in semigroups comes from a question of P. Erd\H{o}s to D.A. Burgess (see \cite{Burgess69} and \cite{Gillam72}), which can be restated as follows.

{\sl Let $\mathcal{S}$ be a finite nonempty semigroup of order $n$. A sequence of terms from $\mathcal{S}$ of length $n$ must contain one or more terms whose product, in some order, is idempotent?}

Burgess \cite{Burgess69} in 1969 gave an answer to this question in the case when $\mathcal{S}$ is commutative or contains only one idempotent. D.W.H. Gillam, T.E. Hall and N.H. Williams \cite{Gillam72} proved that a sequence $T$ over any finite semigroup $\mathcal{S}$ of length at least $|\mathcal{S}\setminus{\rm E}(\mathcal{S})|+1$ must contain one or more terms whose product, in the order induced from the sequence $T$, is an idempotent, and therefore, completely answered Erd\H{o}s' question. The Gillam-Hall-Williams Theorem was extended to infinite semigroups by the author \cite{wangidempotent}.
It was also remarked  that the bound $|\mathcal{S}\setminus {\rm E}(\mathcal{S})|+1$,  although is
optimal for general semigroups $\mathcal{S}$, can be improved, at least in principle, for specific classed of semigroups.
Naturally, one combinatorial invariant was aroused by Erd\H{o}s' question with respect to idempotents of semigroups. Since we deal with the multiplicative semigroup of a commutative ring in this paper, we introduce only the definition of this invariant for commutative semigroups here.

\noindent \textbf{Definition.} (\cite{wangidempotent}, Definition 4.1) \ {\sl For a commutative semigroup $\mathcal{S}$, we define the {\bf Erd\H{o}s-Burgess constant} of $\mathcal{S}$, denoted by $\textsc{I}(\mathcal{S})$, to be the smallest positive integer $\ell$ (if exists) such that every sequence $T$ of terms from $\mathcal{S}$ and of length $\ell$ must contain one or more terms whose product is an idempotent. If no such integer $\ell$ exists, we let $\textsc{I}(\mathcal{S})=\infty$.}

Note that if the commutative semigroup $\mathcal{S}$ is finite, Gillam-Hall-Williams Theorem definitely tells us that the Erd\H{o}s-Burgess constant of  $\mathcal{S}$ exists (i.e., $\textsc{I}(\mathcal{S})$ is finite) and bounded above by $|\mathcal{S}\setminus{\rm E}(\mathcal{S})|+1$. In particular,
when the semigroup $\mathcal{S}$ happens to be a finite abelian group, the Erd\H{o}s-Burgess constant reduces to a classical combinatorial invariant, the Davenport constant.
The Davenport constant of a finite abelian group $G$, denoted ${\rm D}(G)$, is defined as the smallest positive integer $\ell$  such that every sequence of terms from $G$ of length at least $\ell$ contains one or more terms with the product being the identity element of $G$. This invariant was popularized by H. Davenport in the 1960's, notably for its link with algebraic number theory (as reported in \cite{Olson1}). Davenport constant has been investigated extensively in the past over 50 years, and  found applications in other areas, including Factorization Theory of Algebra \cite{GRuzsa, GH}, Classical Number Theory \cite{AGP}, Graph Theory \cite{AlonJctb}, and Coding Theory \cite{MaOrSaSc}.
What is more important, a lot of researches were motivated by the Davenport constant together with the celebrated EGZ Theorem obtained by P. Erd\H{o}s, A. Ginzburg and A. Ziv \cite{EGZ} in 1961 on additive properties of sequences in groups, which have been developed into a branch, called zero-sum theory (see e.g. \cite{GaoGeroldingersurvey}, and \cite{Grynkiewiczmono} for a survey), in Combinatorial Number Theory.
Recently  some zero-sum type problems were also investigated in the setting of commutative semigroups
(see e.g. \cite{wangDavenportII, wangAddtiveirreducible, wang-zhang-qu, wang-zhang-wang-qu}).

To investigate the Erd\H{o}s-Burgess constant associated with commutative rings,  one fundamental question remains:

{\bf When does the Erd\H{o}s-Burgess constant exist for a commutative ring? }

In this paper, we shall answer this question by proving the following theorem.

\bigskip

\bigskip

\begin{theorem} \label{theorem equivalent I(S) finite in rings} \ Let $R$ be a commutative unitary ring, and let $\mathcal{S}_R$ be the the multiplicative semigroup of $R$. If $\textsc{I}(\mathcal{S}_R)$ is finite, then one of the following two conditions holds:

\noindent (i) The ring $R$ is finite;

\noindent (ii) The Jacobson radical $J(R)$ is finite and $R\diagup J(R)\cong B \times \prod\limits_{i=1}^t \mathbb{F}_{q_i}$, where  $B$ is an infinite Boolean unitary ring, and $\mathbb{F}_{q_1},\ldots, \mathbb{F}_{q_t}$ are finite fields with $0\leq t\leq \textsc{I}(\mathcal{S}_R)-1$ and prime powers $q_1,\ldots, q_t>2$.
\end{theorem}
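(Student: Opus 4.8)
The plan is to deduce, from finiteness of $\textsc{I}(\mathcal{S}_R)$, three successive facts: that $J(R)$ is finite; that $S:=R/J(R)$ (with multiplicative semigroup $\mathcal{S}_S$) is von Neumann regular with all residue fields finite of bounded order; and that at most $\textsc{I}(\mathcal{S}_R)-1$ of these residue fields have order exceeding $2$ — after which a product decomposition of $S$ drops out. Throughout I will use two easy monotonicity facts: $\textsc{I}(\mathcal{T})\le\textsc{I}(\mathcal{S}_R)$ for any subsemigroup $\mathcal{T}$ of $\mathcal{S}_R$ (an idempotent of $\mathcal{T}$ is an idempotent of $\mathcal{S}_R$, and products of terms from $\mathcal{T}$ stay in $\mathcal{T}$), and $\textsc{I}(\mathcal{S}_{R/I})\le\textsc{I}(\mathcal{S}_R)$ for every ideal $I$ (idempotents are preserved by the quotient map and sequences lift along it).

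\emph{Finiteness of $J(R)$.} The set $1+J(R)$ is a subgroup of the unit group of $R$, hence a subsemigroup of $\mathcal{S}_R$, and its only idempotent is $1$; therefore $\textsc{I}(1+J(R))$ coincides with the Davenport constant $\mathrm{D}(1+J(R))$ and is finite. But an infinite abelian group has infinite Davenport constant — an element of infinite order, a family of elements of unbounded finite order, or an infinite independent family of prime‑order elements each yields arbitrarily long zero‑sum‑free sequences — so $1+J(R)$, and hence $J(R)$, is finite. (The same argument applied to the whole unit group shows $R$ has only finitely many units.)

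\emph{Structure of $S$.} Applying $\textsc{I}(\mathcal{S}_R)<\infty$ to a constant sequence $a,\dots,a$ of that length shows some $a^{k}$ with $1\le k\le\textsc{I}(\mathcal{S}_R)$ is idempotent, hence so is $a^{m}$ for $m:=\operatorname{lcm}(1,\dots,\textsc{I}(\mathcal{S}_R))$; thus $x^{m}=x^{2m}$ holds identically on $R$, and a fortiori on $S$. Since the nilradical of $S$ lies in $J(S)=0$, $S$ is reduced; writing $e=a^{m}$ (an idempotent), the element $(1-e)a$ satisfies $\bigl((1-e)a\bigr)^{m}=(1-e)e=0$, so $(1-e)a=0$, $a=ea$, $a$ is a unit of $eS$ with inverse $a^{m-1}$, and $a=a^{2}a^{m-1}\in a^{2}S$: thus $S$ is von Neumann regular. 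In each residue field $S/\mathfrak{m}$ every nonzero element satisfies $x^{m}=1$, so $|S/\mathfrak{m}|\le m+1$. Moreover, if $\mathfrak{m}_{1},\dots,\mathfrak{m}_{\ell}$ are distinct maximal ideals with $|S/\mathfrak{m}_{i}|>2$, choose a generator $g_{i}$ of $(S/\mathfrak{m}_{i})^{\times}$ and, by the Chinese Remainder Theorem, an element $a_{i}\in S$ with $a_{i}\equiv g_{i}\pmod{\mathfrak{m}_{i}}$ and $a_{i}\equiv1\pmod{\mathfrak{m}_{j}}$ for $j\ne i$; for every nonempty $J$ and any $i\in J$, the image of $\prod_{j\in J}a_{j}$ in $S/\mathfrak{m}_{i}$ equals $g_{i}\notin\{0,1\}$, so no nonempty sub‑product is idempotent. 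Hence $\textsc{I}(\mathcal{S}_S)\ge\ell+1$, and the number $t$ of maximal ideals with residue field of order $>2$ satisfies $t\le\textsc{I}(\mathcal{S}_S)-1\le\textsc{I}(\mathcal{S}_R)-1$.

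\emph{Splitting and conclusion.} Let $\mathfrak{m}_{1},\dots,\mathfrak{m}_{t}$ be these ``large'' maximal ideals, $q_{i}:=|S/\mathfrak{m}_{i}|>2$, and choose $a_{i}\in S$ as above for this finite set. Then $c_{i}:=a_{i}^{q_{i}-1}-a_{i}$ lies outside $\mathfrak{m}_{i}$ but inside every other maximal ideal (at a residue field $\mathbb{F}_{2}$ one has $u^{q_{i}-1}-u=0$ for $u\in\{0,1\}$; at $\mathfrak{m}_{j}$ with $j\ne i$ it is $1-1=0$), so by von Neumann regularity $c_{i}S=\epsilon_{i}S$ for an idempotent $\epsilon_{i}$ lying in exactly the maximal ideals $\ne\mathfrak{m}_{i}$. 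The $\epsilon_{i}$ are then pairwise orthogonal, and with $\epsilon_{0}=1-\sum_{i}\epsilon_{i}$ we get $S\cong\epsilon_{0}S\times\prod_{i=1}^{t}\epsilon_{i}S$, where each $\epsilon_{i}S$ is a von Neumann regular ring with a unique maximal ideal, hence a field $\cong S/\mathfrak{m}_{i}=\mathbb{F}_{q_{i}}$, and $\epsilon_{0}S$ is von Neumann regular with every residue field $\mathbb{F}_{2}$, hence Boolean. Thus $R/J(R)\cong B\times\prod_{i=1}^{t}\mathbb{F}_{q_{i}}$ with $B$ Boolean and each $q_{i}>2$. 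If $R$ is finite, case (i) holds; if $R$ is infinite, then $J(R)$ is finite by the second paragraph, so $R/J(R)$ is infinite and therefore $B$ is infinite, which is case (ii). The step I expect to be the genuine obstacle is the finiteness of $J(R)$: the clean route above hinges on the monotonicity of $\textsc{I}$ under subsemigroups together with the (standard but essential) fact that infinite abelian groups have infinite Davenport constant; a more hands‑on alternative is to build arbitrarily long idempotent‑free sequences inside $1+J(R)$ directly from an infinite nil ideal, but that becomes delicate when $J(R)$ is nil without being nilpotent. A secondary technical point is verifying the ``supported at a single maximal ideal'' claims in the last paragraph — that the relevant principal ideals are generated by idempotents and that these are orthogonal with complementary sum.
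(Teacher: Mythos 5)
Your proof is correct, and it overlaps with the paper's in two places while diverging substantially in the structural step. The overlap: you obtain finiteness of $J(R)$ exactly as the paper does, via the subgroup $1+J(R)\subseteq {\rm U}(R)$ and the fact that an infinite abelian group has infinite Davenport constant (the paper's Lemma \ref{lemma folklore}); and your CRT construction of an idempotent-product free sequence bounding the number of maximal ideals with residue field of order greater than $2$ is essentially the paper's Claim B. The divergence is in how the decomposition of $R\diagup J(R)$ is reached. The paper works at the level of ideals: it shows each maximal ideal has finite index by applying finiteness of $\textsc{I}$ to ${\rm U}(R\diagup M)$ (Claim A), writes $J(R)=N\cap K_1\cap\cdots\cap K_t$ with $N$ the intersection of the index-two maximal ideals, applies the Chinese Remainder Theorem to this coprime family, and invokes McCoy's subdirect-sum characterization (Lemmas \ref{lemma respentation as subdirect sum}--\ref{lemma R/N is boolean ring}) to identify $R\diagup N$ as a Boolean ring. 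You instead extract the identity $x^{2m}=x^{m}$ with $m=\operatorname{lcm}(1,\dots,\textsc{I}(\mathcal{S}_R))$ from constant sequences, which at one stroke bounds every residue field by $m+1$ (replacing Claim A) and makes $R\diagup J(R)$ von Neumann regular, and you then split off the finitely many large residue fields by manufacturing a complete orthogonal family of idempotents; the checks you flag as delicate (that $c_iS=\epsilon_iS$ for an idempotent supported off $\mathfrak{m}_i$, orthogonality via $J(S)=0$) do all go through. Your route buys a uniform bound on the $q_i$ and avoids the subdirect-sum machinery at the cost of some von Neumann regular ring theory. One stylistic remark: your justification that infinite abelian groups have infinite Davenport constant, via the case analysis infinite order / unbounded exponent / Pr\"ufer decomposition, is heavier than needed --- the paper's counting argument (a finite sequence has only finitely many subproducts, so some inverse is missed and the sequence can be extended) works uniformly.
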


\noindent \textbf{Remark.} \ Recall that by Gillam-Hall-Williams Theorem, if the ring $R$ is finite then $\textsc{I}(\mathcal{S}_R)$ exists. Hence, Theorem \ref{theorem equivalent I(S) finite in rings} asserts that the Erd\H{o}s-Burgess constant exists only for {\bf finite} commutative rings except for an infinite commutative rings with a very special form given as (ii). That is, to study this invariant in the realm of commutative rings, we may consider it only for finite commutative rings.

\section{The Proof of Theorem \ref{theorem equivalent I(S) finite in rings}}

For integers $a,b\in \mathbb{Z}$, we set $[a,b]=\{x\in \mathbb{Z}: a\leq x\leq b\}$.
Let $(R,+,*)$ be a commutative unitary ring, and let $T$ be a sequence of terms from $R$. By $|T|$ we denote the length of the sequence $T$. We call $T$ an {\sl idempotent-product free} sequence provided that no idempotent of $R$ can be represented as a product of one or more terms from $T$. By the  definition, we have immediately that the Erd\H{o}s-Burgess constant $\textsc{I}(\mathcal{S}_R)$ exists if and only if ${\rm sup}\ \{|T|: T \mbox{ is taken over all idempotent-product free sequences over } R\}$ is finite.

\begin{lemma} \label{lemma folklore} Let $G$ be an abelian group. Then ${\rm I}(G)$ is finite if and only if $G$ is finite.
\end{lemma}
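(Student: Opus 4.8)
The plan is to reduce the statement to the classical fact that a finite abelian group has a finite Davenport constant, and to a short greedy construction in the infinite case. Throughout, write the group operation multiplicatively with identity $e$, and begin with the key structural simplification: in a group the only idempotent is $e$, since $x*x=x$ forces $x=e$ after multiplying by $x^{-1}$. Hence a sequence $T$ over $G$ is idempotent-product free precisely when no nonempty subsequence of $T$ has product equal to $e$ (it is ``zero-sum free'' in additive language), and $\textsc{I}(G)$ is finite if and only if the lengths of such sequences are bounded; when finite, $\textsc{I}(G)$ coincides with the Davenport constant ${\rm D}(G)$.

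For the ``if'' direction, suppose $G$ is finite with $|G|=n$. Given any sequence $a_1,\ldots,a_n$ over $G$, consider the $n$ partial products $p_k=a_1*\cdots*a_k$ for $k\in[1,n]$. If some $p_k=e$ we are done; otherwise two of them coincide, say $p_i=p_j$ with $i<j$, and then $a_{i+1}*\cdots*a_j=e$ is an idempotent. Thus $\textsc{I}(G)\le n<\infty$. (Alternatively, this is immediate from the Gillam--Hall--Williams Theorem applied to the finite semigroup $G$.)

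For the ``only if'' direction I would argue the contrapositive: if $G$ is infinite, I will exhibit an idempotent-product free sequence of every length $m$, which forces $\textsc{I}(G)=\infty$. Construct the terms $g_1,\ldots,g_m$ greedily. Suppose $g_1,\ldots,g_k$ have been chosen so that $\prod_{j\in J}g_j\ne e$ for every nonempty $J\subseteq[1,k]$. The set $P_k=\{\prod_{j\in J}g_j:\ J\subseteq[1,k]\}$, with the empty product read as $e$, has at most $2^k$ elements, hence so does its inverse set $P_k^{-1}=\{x^{-1}:x\in P_k\}$; since $G$ is infinite, we may pick $g_{k+1}\in G\setminus P_k^{-1}$. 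Then for every nonempty $J\subseteq[1,k+1]$: if $k+1\notin J$ the product is $\ne e$ by the inductive hypothesis, while if $k+1\in J$, writing $J=J'\cup\{k+1\}$ with $J'\subseteq[1,k]$ and using commutativity, $\prod_{j\in J}g_j=g_{k+1}*\prod_{j\in J'}g_j\ne e$ because $g_{k+1}\notin P_k^{-1}$. This completes the induction and yields an idempotent-product free sequence of length $m$ for each $m$.

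There is essentially no hard step here — this is the ``folklore'' content of the lemma — but the one point deserving care is the reduction at the start: one must note that idempotents in a group collapse to the identity, so that the Erd\H{o}s--Burgess problem for $G$ is exactly the zero-sum (Davenport) problem, and that, $G$ being commutative, the order in which a subsequence is multiplied is immaterial. Everything else is pigeonhole in the finite case and a finite-forbidden-set greedy choice in the infinite case.
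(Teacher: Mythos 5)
Your proof is correct and follows essentially the same route as the paper's: the finite case is the standard pigeonhole argument on partial products (which the paper merely cites as well-known), and the infinite case is the same extension argument --- the paper extends an arbitrary idempotent-product free sequence by one element $g$ with $g^{-1}$ outside the finite set of subsequence products, which is exactly your greedy step. No issues.
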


\begin{proof}  Since the identity element is the unique idempotent in $G$, the sufficiency of the lemma is well-known in zero-sum theory and follows from a simple application of the pigeonhole principle. Now we show the necessity. Suppose $|G|$ is infinite. Let $T=(a_1,\ldots, a_n)$ be an arbitrary idempotent-product free sequence over $G$. By the infinity of $|G|$, we can find a nonidentity element $g\in G$ such that $g^{-1}$, the inverse of $g$, can not be represented as the product of one or more terms from $T$.  We see that the sequence $(a_1,\ldots, a_n, g)$ obtained by adjoining the element $g$ to $T$ is idempotent-product free. By the arbitrariness of $T$, we conclude that ${\rm I}(G)$ is infinite, completing the proof.
\end{proof}

\begin{lemma}\label{lemma I(sub)< I(S)} Let $S$ be a commutative semigroup and $S'$ a subsemigroup of $S$. If ${\rm I}(\mathcal{S})$ is finite, then ${\rm I}(\mathcal{S}')$ is finite and ${\rm I}(\mathcal{S}')\leq {\rm I}(\mathcal{S})$.
\end{lemma}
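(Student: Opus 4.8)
The plan is to prove the contrapositive-friendly statement directly: assuming $\textsc{I}(\mathcal{S})$ is finite, show that every idempotent-product free sequence over $\mathcal{S}'$ has length at most $\textsc{I}(\mathcal{S})-1$, which forces $\textsc{I}(\mathcal{S}')$ to be finite and $\le \textsc{I}(\mathcal{S})$. The key observation is that the notion of being idempotent-product free transfers upward along the inclusion $\mathcal{S}' \hookrightarrow \mathcal{S}$. First I would take an arbitrary sequence $T = (a_1,\ldots,a_\ell)$ of terms from $\mathcal{S}'$ that is idempotent-product free \emph{with respect to $\mathcal{S}'$}, i.e. no product $\prod_{j\in J} a_j$ over a nonempty $J \subset [1,\ell]$ lies in ${\rm E}(\mathcal{S}')$. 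I then view $T$ as a sequence of terms from $\mathcal{S}$ (legitimate since $\mathcal{S}' \subseteq \mathcal{S}$ and the operation on $\mathcal{S}'$ is the restriction of that on $\mathcal{S}$, so each subproduct computed in $\mathcal{S}'$ equals the corresponding subproduct computed in $\mathcal{S}$).

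The crucial step is the inclusion ${\rm E}(\mathcal{S}) \cap \mathcal{S}' \subseteq {\rm E}(\mathcal{S}')$, in fact equality: an element $e \in \mathcal{S}'$ satisfies $e * e = e$ in $\mathcal{S}$ if and only if it satisfies $e*e = e$ in $\mathcal{S}'$, because the product $e*e$ is the same element regardless of which semigroup we compute it in. Consequently, if some nonempty subproduct $\prod_{j\in J} a_j$ were an idempotent of $\mathcal{S}$, then — since all the $a_j$ lie in the subsemigroup $\mathcal{S}'$, their product also lies in $\mathcal{S}'$ — that product would be an element of $\mathcal{S}'$ that is idempotent in $\mathcal{S}$, hence idempotent in $\mathcal{S}'$, contradicting that $T$ is idempotent-product free over $\mathcal{S}'$. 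Therefore $T$, read as a sequence over $\mathcal{S}$, is idempotent-product free over $\mathcal{S}$ as well.

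Since $\textsc{I}(\mathcal{S})$ is finite, every idempotent-product free sequence over $\mathcal{S}$ has length at most $\textsc{I}(\mathcal{S}) - 1$; in particular $\ell \le \textsc{I}(\mathcal{S}) - 1$. As $T$ was an arbitrary idempotent-product free sequence over $\mathcal{S}'$, the supremum of lengths of such sequences is finite and bounded by $\textsc{I}(\mathcal{S}) - 1$, so by the characterization of $\textsc{I}$ recorded just before Lemma \ref{lemma folklore}, $\textsc{I}(\mathcal{S}')$ is finite and $\textsc{I}(\mathcal{S}') \le \textsc{I}(\mathcal{S})$.

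I do not anticipate a genuine obstacle here; the only point requiring a moment's care is making explicit that a subsemigroup need not contain all idempotents of the ambient semigroup, yet every idempotent it \emph{does} contain is genuinely idempotent — and, conversely and more importantly for us, every idempotent of $\mathcal{S}'$ is an idempotent of $\mathcal{S}$, which is what makes the upward transfer of the idempotent-product free property work. (Note in particular that $\mathcal{S}'$ may well have \emph{more} idempotents than $\mathcal{S}\cap\mathcal{S}'$ in other settings, but that direction is irrelevant here.) Everything else is a direct unwinding of the definitions.
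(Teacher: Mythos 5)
Your argument is correct and is exactly the paper's proof, which simply states the key fact---that an idempotent-product free sequence over $\mathcal{S}'$ remains idempotent-product free when viewed over $\mathcal{S}$---and leaves the verification (closure of $\mathcal{S}'$ under products plus the intrinsic nature of the identity $e*e=e$) to the reader. You have merely spelled out that verification in full detail; nothing differs in substance.
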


\begin{proof} The conclusion follows immediately from the fact that any idempotent-product free sequence of terms from $S'$ is also an idempotent-product free sequence of terms from $S$.
\end{proof}

\begin{lemma}\label{lemma I(im)< I(S)} Let $S$ and $S'$ be commutative semigroups. If there is an epimorphism $\varphi$ of $S$ onto $S'$, then
$\textsc{I}(S')\leq \textsc{I}(S)$.
\end{lemma}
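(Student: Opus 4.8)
The plan is to lift idempotent-product free sequences from $S'$ back to $S$ along $\varphi$, exploiting the elementary fact that a semigroup homomorphism carries idempotents to idempotents and respects finite products. If $\textsc{I}(S)=\infty$ there is nothing to prove, so I would assume $\textsc{I}(S)=\ell$ is finite and aim to show that every sequence of terms from $S'$ of length $\ell$ contains one or more terms whose product is an idempotent.

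So let $T'=(b_1,\ldots,b_\ell)$ be an arbitrary sequence of terms from $S'$. Since $\varphi$ is surjective, for each $i\in [1,\ell]$ choose $a_i\in S$ with $\varphi(a_i)=b_i$, and form the sequence $T=(a_1,\ldots,a_\ell)$ of terms from $S$. As $|T|=\ell=\textsc{I}(S)$, the definition of the Erd\H{o}s-Burgess constant yields a nonempty subset $J\subseteq [1,\ell]$ with $e:=\prod_{j\in J} a_j$ an idempotent of $S$. Then $\varphi(e)*\varphi(e)=\varphi(e*e)=\varphi(e)$, so $\varphi(e)$ is an idempotent of $S'$, and since $\varphi$ is a homomorphism, $\prod_{j\in J} b_j=\prod_{j\in J}\varphi(a_j)=\varphi\!\left(\prod_{j\in J} a_j\right)=\varphi(e)$ is an idempotent of $S'$. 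Hence no sequence of terms from $S'$ of length $\ell$ is idempotent-product free, which gives $\textsc{I}(S')\leq \ell=\textsc{I}(S)$.

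Equivalently, and avoiding the case distinction, one can argue directly in terms of the supremum characterization recorded before Lemma \ref{lemma folklore}: given any idempotent-product free sequence $T'$ over $S'$, the lifted sequence $T$ constructed as above is idempotent-product free over $S$ — for if some subsequence product of $T$ were an idempotent of $S$, applying $\varphi$ would make the corresponding subsequence product of $T'$ an idempotent of $S'$ — and $|T|=|T'|$; taking suprema over all idempotent-product free sequences over $S'$ and over $S$ respectively yields the inequality. This is the natural counterpart of Lemma \ref{lemma I(sub)< I(S)}, with "subsemigroup" replaced by "homomorphic image."

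I do not expect any genuine obstacle here: the whole content is the observation that homomorphic images of idempotents are idempotents, together with the use of surjectivity to perform the lift, so the write-up should be a few lines.
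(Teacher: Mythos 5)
Your proposal is correct and matches the paper's proof: the paper likewise lifts an arbitrary idempotent-product free sequence $T'$ over $S'$ to a sequence $T$ over $S$ via surjectivity and observes that $T$ must be idempotent-product free since $\varphi$ sends idempotent subsequence products to idempotent subsequence products. Your second, supremum-based formulation is exactly the paper's argument, and the first formulation is just its contrapositive, so there is nothing to add.
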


\begin{proof} Let $T'=(b_1,\ldots,b_{\ell})$ be an arbitrary idempotent-product free sequence of terms from $S'$. We can take a sequence $T=(a_1,\ldots,a_{\ell})$ of terms from $S$ such that $\varphi(a_i)=b_i$ for each $i\in [1,\ell]$. Since the epimorphism $\varphi$ always maps an idempotent of  $S$ to an idempotent of $S'$, we have that the sequence $T$ is also idempotent-product free in $S$. By the arbitrariness of $T'$, we derive that $\textsc{I}(S')\leq \textsc{I}(S)$.
\end{proof}

\begin{lemma} (see \cite{Neal H McCoy}, Theorem 3.9) \label{lemma respentation as subdirect sum} \ A ring $R$ has a representation as a subdirect sum of rings $S_i$, $i\in \mathcal{A}$, if and only if for each $i\in \mathcal{A}$ there exists in $R$ a two-sided ideal $K_i$ such that $R\diagup K_i\cong S_i$ and, moreover, $\bigcap\limits_{i\in \mathcal{A}} K_i=(0_R)$.
\end{lemma}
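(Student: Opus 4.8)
The plan is to prove both implications directly from the definition of a subdirect sum, invoking nothing beyond the first isomorphism theorem for rings. First I would recall the definition: $R$ is a subdirect sum of the rings $S_i$, $i\in\mathcal{A}$, precisely when there is a ring monomorphism $\phi\colon R\to \prod_{i\in\mathcal{A}} S_i$ into the full direct product such that, for every $i\in\mathcal{A}$, the composition $\pi_i\circ\phi$ with the canonical projection $\pi_i\colon \prod_{i\in\mathcal{A}} S_i\to S_i$ is surjective. The entire argument then amounts to translating these two defining properties --- injectivity of $\phi$ and surjectivity of each coordinate map --- into the two conditions imposed on the family $\{K_i\}_{i\in\mathcal{A}}$.

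For the necessity, I would start from such a monomorphism $\phi$ and set $\psi_i=\pi_i\circ\phi\colon R\to S_i$. Each $\psi_i$ is a surjective ring homomorphism, so its kernel $K_i=\ker\psi_i$ is a two-sided ideal of $R$, and the first isomorphism theorem gives $R\diagup K_i\cong S_i$. To obtain $\bigcap_{i\in\mathcal{A}} K_i=(0_R)$, I would observe that $r$ lies in this intersection exactly when $\psi_i(r)=0$ for every $i$, i.e. when every coordinate of $\phi(r)$ vanishes, i.e. when $\phi(r)=0$; injectivity of $\phi$ then forces $r=0_R$.

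For the sufficiency, I would take the given two-sided ideals $K_i$ together with fixed isomorphisms $\theta_i\colon R\diagup K_i \xrightarrow{\sim} S_i$ and the hypothesis $\bigcap_{i\in\mathcal{A}} K_i=(0_R)$. Writing $\eta_i\colon R\to R\diagup K_i$ for the canonical quotient map and putting $\psi_i=\theta_i\circ\eta_i$ (a surjection with kernel exactly $K_i$), I would define $\phi\colon R\to \prod_{i\in\mathcal{A}} S_i$ by $\phi(r)=(\psi_i(r))_{i\in\mathcal{A}}$. This $\phi$ is a ring homomorphism, being assembled coordinatewise from ring homomorphisms, and each coordinate $\pi_i\circ\phi=\psi_i$ is surjective by construction. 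It is injective because $\phi(r)=0$ forces $r\in K_i$ for all $i$, hence $r\in\bigcap_{i\in\mathcal{A}} K_i=(0_R)$. Thus $\phi$ exhibits $R$ as a subdirect sum of the $S_i$.

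I do not anticipate a genuine obstacle here, since the statement is a formal consequence of the isomorphism theorems. The only point demanding care is the bookkeeping in the sufficiency direction: one must fix the isomorphisms $\theta_i$ explicitly so that $\ker\psi_i$ equals $K_i$ on the nose rather than merely up to isomorphism, for it is this exact equality that makes the intersection condition correspond precisely to the injectivity of $\phi$.
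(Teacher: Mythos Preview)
Your proof is correct and is precisely the standard argument for this classical fact. Note, however, that the paper does not actually supply a proof of this lemma: it is merely quoted from McCoy's textbook (Theorem~3.9 there) as a known result, so there is no ``paper's own proof'' to compare against. Your write-up is exactly the kind of argument one finds in that reference, and the care you take in fixing the isomorphisms $\theta_i$ so that $\ker\psi_i=K_i$ on the nose is the only subtlety worth flagging.
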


\begin{lemma} (see \cite{Neal H McCoy}, Theorem 3.16) \ \label{lemma condition to be boolean ring} \ A ring is isomorphic to a subdirect sum of fields $\mathbb{F}_2$ is and only it is a Boolean ring.
\end{lemma}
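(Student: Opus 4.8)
The plan is to prove the two implications separately, using Lemma~\ref{lemma respentation as subdirect sum} as the bridge that turns statements about subdirect sums into statements about families of ideals. Throughout I write a \emph{Boolean ring} for a ring in which $x^2=x$ holds for every element $x$, and I assume $R$ is nonzero (the zero ring being the vacuous empty subdirect sum, which I would dispose of in a single line).

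For the easy direction, suppose $R$ is isomorphic to a subdirect sum of copies of $\mathbb{F}_2$. By definition this realizes $R$ as a subring of a direct product $\prod_i \mathbb{F}_2$. The condition defining a Boolean ring is the single identity $x^2=x$, which holds in $\mathbb{F}_2$, is inherited by arbitrary direct products coordinatewise, and passes to subrings. Hence it holds throughout $R$, so $R$ is Boolean.

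For the substantive direction, suppose $R$ is Boolean, and I would argue in four steps. First, I record the standard elementary consequences of $x^2=x$: since $x+x$ is itself idempotent, expanding $(x+x)^2=x+x$ gives $4x=2x$, so $2x=0$ and $R$ has characteristic $2$; expanding $(x+y)^2=x+y$ gives $xy+yx=0$, and combined with $2=0$ this yields $xy=yx$, so $R$ is commutative. Second, for any prime ideal $P$ the quotient $R/P$ is a Boolean integral domain, and from $\bar x(\bar x-\bar 1)=\bar 0$ together with the absence of zero divisors I conclude $\bar x\in\{\bar 0,\bar 1\}$, whence $R/P\cong\mathbb{F}_2$; in particular every prime ideal of $R$ is maximal. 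Third, I show that the intersection of all prime ideals of $R$ equals $(0)$: given a nonzero $a\in R$, the set $\{1,a\}$ equals $\{1,a,a^2,\dots\}$ because $a$ is idempotent, and it is a multiplicative set not containing $0$, so the usual maximality/Zorn argument produces a prime ideal $P$ disjoint from it, i.e.\ with $a\notin P$; thus no nonzero element lies in every prime. Fourth, taking the family $\{K_i\}$ to be the collection of all prime ideals of $R$, we have $R/K_i\cong\mathbb{F}_2$ for each $i$ and $\bigcap_i K_i=(0)$, so Lemma~\ref{lemma respentation as subdirect sum} gives exactly that $R$ is a subdirect sum of copies of $\mathbb{F}_2$.

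The main obstacle is the third step: guaranteeing enough prime ideals for their intersection to be trivial. For a finite Boolean ring this is immediate, but for an infinite Boolean ring the existence of a prime ideal avoiding a prescribed nonzero element genuinely invokes Zorn's lemma. The underlying fact is that the intersection of all prime ideals is the nilradical, and a Boolean ring is reduced since $x^2=x$ forces $x^n=x$ for all $n\geq 1$, so the nilradical is $(0)$. The remaining steps are routine once commutativity and the identification $R/P\cong\mathbb{F}_2$ are established.
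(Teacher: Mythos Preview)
Your proof is correct. Note, however, that the paper does not supply its own proof of this lemma: it is quoted as Theorem~3.16 from McCoy's \emph{The Theory of Rings} and used as a black box. So there is no argument in the paper to compare against; you have simply filled in a cited result with a self-contained proof.

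The route you take is the standard one and dovetails with the paper's setup: you use exactly the subdirect-sum criterion of Lemma~\ref{lemma respentation as subdirect sum}, identify each quotient $R/P$ with $\mathbb{F}_2$, and verify $\bigcap P=(0)$ via the nilradical (a Boolean ring is reduced). One small remark: your presentation tacitly assumes $R$ is unital when writing $\bar x(\bar x-\bar 1)=\bar 0$ and when invoking the multiplicative set $\{1,a\}$. McCoy's formulation does not require an identity, but in the paper the lemma is only applied to $R/N$ with $R$ unital, so this is harmless here; if you wanted full generality you could instead argue directly that a Boolean integral domain has at most two elements (from $\bar x^2=\bar x$ and cancellation) and use the non-unital characterization of the prime radical.
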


\begin{lemma}  \label{lemma R/N is boolean ring} \ Let $R$ be a commutative unitary ring. Let $\{M_i: i\in \mathcal{A}\}$ be a family (nonempty) of maximal ideals of $R$ with index two. Then $R\diagup \bigcap\limits_{i\in \mathcal{A}} M_i$ is a Boolean unitary ring.
\end{lemma}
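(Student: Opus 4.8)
The plan is to pass to the quotient ring $\overline{R} := R/K$, where $K := \bigcap_{i\in\mathcal{A}} M_i$, and to exhibit $\overline{R}$ as a subdirect sum of copies of $\mathbb{F}_2$; then Lemma~\ref{lemma respentation as subdirect sum} and Lemma~\ref{lemma condition to be boolean ring} will finish the job essentially for free.

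First I would unwind the hypothesis. Saying that a maximal ideal $M_i$ has index two means $|R/M_i| = 2$; since $R$, and hence $R/M_i$, is unitary, the only two-element unital ring is the field $\mathbb{F}_2$, so $R/M_i \cong \mathbb{F}_2$ for every $i\in\mathcal{A}$. Because $K \subseteq M_i$ for all $i$, the correspondence theorem provides ideals $\overline{M_i} := M_i/K$ of $\overline{R}$ with $\overline{R}/\overline{M_i} \cong R/M_i \cong \mathbb{F}_2$. The one computation worth recording is that $\bigcap_{i\in\mathcal{A}} \overline{M_i} = \bigl(\bigcap_{i\in\mathcal{A}} M_i\bigr)/K = K/K = (0_{\overline{R}})$, which is again immediate from the correspondence theorem since every $M_i$ contains $K$.

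With these two facts in hand, Lemma~\ref{lemma respentation as subdirect sum} applied to the family $\{\overline{M_i} : i\in\mathcal{A}\}$ shows that $\overline{R}$ is isomorphic to a subdirect sum of the rings $\overline{R}/\overline{M_i} \cong \mathbb{F}_2$. Lemma~\ref{lemma condition to be boolean ring} then tells us that $\overline{R}$ is a Boolean ring, and it is unitary as a quotient of the unitary ring $R$; hence $R/K$ is a Boolean unitary ring, which is the assertion.

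I do not expect a serious obstacle here: once one observes that the index-two condition pins down each $R/M_i$ to be $\mathbb{F}_2$ (this is where the presence of $1$ is used) and that the intersection of the $\overline{M_i}$ in $R/K$ vanishes by the very choice of $K$, the statement is a two-line consequence of the cited structure theorems. The only points to handle with a little care are the "unitary" qualifier and the identification of the two-element unital ring with $\mathbb{F}_2$ rather than with the zero-multiplication ring of order two.
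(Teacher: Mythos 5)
Your proposal is correct and follows essentially the same route as the paper: pass to $R/\bigcap_{i}M_i$, note that the images $M_i/\bigcap_j M_j$ are ideals with quotient $\mathbb{F}_2$ and trivial intersection, and invoke Lemma~\ref{lemma respentation as subdirect sum} together with Lemma~\ref{lemma condition to be boolean ring}. The only difference is cosmetic --- you spell out why a two-element unital ring must be $\mathbb{F}_2$, a point the paper leaves implicit.
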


\begin{proof} \ Let
\begin{equation}\label{equation N=cap Mi}
N=\bigcap\limits_{i\in \mathcal{A}} M_i.
\end{equation}
 We see that $M_i\diagup N$ are distinct maximal ideals of $R\diagup N$ with index $[R\diagup N: M_i\diagup N]=[R: M_i]=2$, and so
\begin{equation}\label{equation R/N Mi/N=F2}
\frac{R\diagup N}{M_i\diagup N}\cong \mathbb{F}_2,
\end{equation}
 where $i\in \mathcal{A}$. By \eqref{equation N=cap Mi}, we derive that
 \begin{equation}\label{equation cap=0}
 \bigcap_{i\in \mathcal{A}}(M_i\diagup N)=(0_{R\diagup N}).
 \end{equation}
 By \eqref{equation R/N Mi/N=F2}, \eqref{equation cap=0}, Lemma \ref{lemma respentation as subdirect sum} and Lemma \ref{lemma condition to be boolean ring}, we derive that $R\diagup N$ is a Boolean unitary ring.
 \end{proof}

\noindent {\bf Proof of Theorem \ref{theorem equivalent I(S) finite in rings}.} \ Suppose
\begin{equation}\label{equation I(S_R)=n}
\textsc{I}(\mathcal{S}_R)=n
\end{equation}
 is finite and
\begin{equation}\label{equation SrsetminuE(Sr)=infinity}
|R|=\infty.
\end{equation}
It suffices to prove (ii) holds. Since the group ${\rm U}(R)$ is a subsemigroup of
$\mathcal{S}_R$ where ${\rm U}(R)$ denotes the group of units of the ring $R$, it follows from \eqref{equation I(S_R)=n} and Lemma \ref{lemma I(sub)< I(S)} that $\textsc{I}({\rm U}(R))\leq n.$ By Lemma \ref{lemma folklore}, we derive that
$|{\rm U}(R)|<\infty.$ Since $1_R+J(R)\subset {\rm U}(R)$,  it follows that
\begin{equation}\label{equation J(R) is finite}
|J(R)|<\infty.
\end{equation}

\noindent \textbf{Claim A.} \ The index of each maximal ideal in $R$ is finite.

\noindent {\sl Proof of Claim A.} \ Assume to the contrary that there exists some maximal ideal $M$ such that the index of $M$ in $R$ is infinite, i.e., $R\diagup M$ is an infinite field. Since the group ${\rm U}(R\diagup M)$ is a subsemigroup of
$\mathcal{S}_{R\diagup M}$ and there is a canonic epimorphism of the semigroup $\mathcal{S}_R$ onto $\mathcal{S}_{R\diagup M}$ with rings' multiplication of $R$ and $R\diagup M$, it follows from \eqref{equation I(S_R)=n},  Lemma \ref{lemma I(sub)< I(S)} and Lemma \ref{lemma I(im)< I(S)} that
$\textsc{I}({\rm U}(R\diagup M))\leq \textsc{I}(\mathcal{S}_{R\diagup M}))\leq \textsc{I}(\mathcal{S}_R)=n$. Combined with Lemma \ref{lemma folklore}, we have that $|{\rm U}(R\diagup M)|<\infty$ and so $|R\diagup M|=|{\rm U}(R\diagup M)|+1<\infty$, a contradiction. This proves Claim A. \qed

\noindent \textbf{Claim B.} \ The ring $R$ has at most $n-1$ maximal ideals with index greater than two.

\noindent{\sl Proof of Claim B.} \ Assume to the contrary that there exists at least $n$ distinct maximal ideals, say $M_1,\ldots,M_n$, of $R$ with index greater than two. Combined with Claim A, we see that $R\diagup M_i$ is a finite field of order $|R\diagup M_i|>2$, which implies that $|{\rm U}(R\diagup M_i)|\geq 2$ and so the group ${\rm U}(R\diagup M_i)$ contains at least one non-idempotent element, for each $i\in [1,n]$. Therefore,  there are $b_1,b_2,\ldots,b_n$ (not necessarily distint) of $R$ such that $b_i^2\not\equiv b_i\pmod {M_i}$ for each $i\in [1,n]$. By the Chinese Remainder Theorem, we can find $a_1,\ldots,a_n$ of $R$ such that $a_i\equiv b_i \pmod {M_i}$ and $a_i\equiv 1_R\pmod {M_j}$ for $j\in [1,n]\setminus \{i\}$, where $i\in [1,n]$. Let $L$ be the sequence consisting of exactly all these terms $a_1,\ldots,a_n$. We check that the sequence $L$ is idempotent-product free, which implies that $\textsc{I}(\mathcal{S}_R)\geq |L|+1=n+1$, a contradiction with \eqref{equation I(S_R)=n}. This proves Claim B. \qed

\noindent \textbf{Claim C.} \ The ring $R$ has infinitely many maximal ideals with index two.

\noindent {\sl Proof of Claim C.} \ Assume to the contrary that there exists only finitely many maximal ideals with index two.
Combined with Claim A and Claim B, we derive that $R$ has only finitely many maximal ideals. Since $J(R)=\bigcap\limits_{ M \mbox{ ranges over all maximal ideals}} M$, it follows from the Chinese Remainder Theorem that
$R\diagup J(R)\cong \prod\limits_{M \mbox{ ranges over all maximal ideals}} R\diagup M$. Combined with Claim A,  we derive that $|R\diagup J(R)|$ is finite. By \eqref{equation J(R) is finite}, we derive that $R$ is finite, which is a contradiction with \eqref{equation SrsetminuE(Sr)=infinity}.  This proves Claim C. \qed

Let
$N=\bigcap_{i\in \mathcal{A}} M_i$
where $\{M_i: i\in \mathcal{A}\}$ is the set of all maximal ideals of $R$ of index two.
Take a representation
\begin{equation}\label{equation J(R)=N cap K}
J(R)=N\cap K_1\cap \cdots \cap K_t
\end{equation}
 such that $t\geq 0$ is {\bf minimal}, where $K_1,\ldots,K_t$ are distinct maximal ideals of $R$ of index greater than two. By the minimality of $t$, we conclude that $N\nsubseteq K_i$ for each $i\in [1,t]$ and so $N, K_1, \ldots, K_t$ are pairwise coprime. By \eqref{equation J(R)=N cap K} and the Chinese Remainder Theorem, we derive that $R\diagup J(R)\cong (R\diagup N)\times (\prod\limits_{i=1}^t R\diagup K_i).$ By Claim A, we derive that there exists primes powers
$q_1,\ldots, q_t>2$
  such that $R\diagup K_i\cong \mathbb{F}_{q_i}$ for each $i\in [1,t]$. i.e.,
 \begin{equation}\label{equation R/J(R)=R/N time some Fqi}
 R\diagup J(R)\cong (R\diagup N)\times (\prod\limits_{i=1}^t \mathbb{F}_{q_i}).
\end{equation}
By Lemma \ref{lemma R/N is boolean ring}, we have $R\diagup N$ is a Boolean unitary ring. By \eqref{equation SrsetminuE(Sr)=infinity} and \eqref{equation J(R) is finite}, we see $|R\diagup J(R)|$ is infinite.
 Combined with \eqref{equation R/J(R)=R/N time some Fqi}, we derive that $|R\diagup N|$ is infinite. Combined with \eqref{equation I(S_R)=n} and Claim B, $t\leq \textsc{I}(\mathcal{S}_R)-1$ and (ii) holds readily. This completes the proof of the theorem.  \qed

 As a consequence of Theorem \ref{theorem equivalent I(S) finite in rings}, we have the following.

\begin{cor} \ If $R$ is a commutative Noetherian unitary ring. Then $\textsc{I}(\mathcal{S}_R)$ is finite if and only if $R$ is finite.
\end{cor}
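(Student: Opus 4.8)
The plan is to deduce the corollary directly from Theorem \ref{theorem equivalent I(S) finite in rings}, so the only genuine work is to rule out alternative (ii) when $R$ is Noetherian. The easy direction is immediate and uses nothing about Noetherianity: if $R$ is finite, then by the Gillam-Hall-Williams Theorem $\textsc{I}(\mathcal{S}_R)$ exists, i.e. is finite.

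For the converse, I would assume $\textsc{I}(\mathcal{S}_R)$ is finite and invoke Theorem \ref{theorem equivalent I(S) finite in rings}. If conclusion (i) holds we are done, so suppose (ii) holds: $J(R)$ is finite and $R\diagup J(R)\cong B\times\prod_{i=1}^{t}\mathbb{F}_{q_i}$ with $B$ an infinite Boolean unitary ring. Since $R$ is Noetherian, the quotient $R\diagup J(R)$ is Noetherian, and hence so is the direct factor $B$ (a direct factor is itself a quotient, by the idempotent-generated ideal complementary to it). Thus the whole matter reduces to the claim that a Noetherian Boolean ring must be finite, which contradicts the infinitude of $B$ and therefore excludes case (ii), leaving (i): $R$ is finite.

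To establish that a Noetherian Boolean ring $B$ is finite, I would use standard structure theory. Every Boolean ring is von Neumann regular, hence of Krull dimension $0$, so every prime ideal of $B$ is maximal; a Noetherian ring of Krull dimension $0$ is Artinian, and an Artinian ring is a finite direct product of Artinian local rings. Each local factor is again Boolean, being a quotient of $B$, and in any local ring the only idempotents are $0$ and $1$, so a local Boolean ring is $\{0,1\}\cong\mathbb{F}_2$. Hence $B\cong\mathbb{F}_2^{k}$ for some finite $k$, so $B$ is finite — the required contradiction.

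The argument is short, and the only mildly delicate ingredient — which I would isolate as a separate lemma if a self-contained treatment is preferred — is the commutative-algebra fact that a Noetherian Boolean ring is finite; everything else is a formal consequence of Theorem \ref{theorem equivalent I(S) finite in rings} together with the stability of the Noetherian property under passage to quotients and direct factors. I do not expect any serious obstacle here.
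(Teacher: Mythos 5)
Your proposal is correct and follows essentially the same route as the paper: both deduce the corollary from Theorem \ref{theorem equivalent I(S) finite in rings} by ruling out alternative (ii) on the grounds that an infinite Boolean ring cannot be Noetherian. The only difference is that the paper cites this fact (Clark, Proposition 9.6) while you supply a correct self-contained proof of it via the chain Boolean $\Rightarrow$ zero-dimensional, Noetherian $\Rightarrow$ Artinian $\Rightarrow$ finite product of local Boolean rings $\cong \mathbb{F}_2^k$.
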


\begin{proof} \ Since any infinite Boolean ring is not Noetherian (see \cite{Clark}, Proposition 9.6), we could derive that the ring $R$ meeting Condition (ii) of Theorem \ref{theorem equivalent I(S) finite in rings} is not Noetherian. Then the conclusion follows immediately.
\end{proof}

\begin{prop}\label{prop moreinformation} \ Let $R$ be an infinite commutative unitary ring. If $\textsc{I}(\mathcal{S}_R)$ is finite, then $R$ has infinitely many maximal ideals of index two and has at most $\textsc{I}(\mathcal{S}_R)-1$ maximal ideals with index greater than two and has no maximal ideals of infinite index.
\end{prop}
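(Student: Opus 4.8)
The plan is to observe that, under the standing hypotheses that $\textsc{I}(\mathcal{S}_R)$ is finite and $R$ is infinite, the three assertions of the proposition are precisely the statements recorded as Claims A, B and C inside the proof of Theorem \ref{theorem equivalent I(S) finite in rings}. So the argument is to reproduce those three short reductions, each of which brings a structural statement about maximal ideals down to the abelian group case handled by Lemma \ref{lemma folklore}, via Lemmas \ref{lemma I(sub)< I(S)} and \ref{lemma I(im)< I(S)}.

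First I would treat ``no maximal ideal of infinite index.'' Given a maximal ideal $M$, the field $R/M$ receives $\mathcal{S}_R$ epimorphically (the ring projection restricts to a semigroup epimorphism onto $\mathcal{S}_{R/M}$), and its unit group ${\rm U}(R/M)$ sits in $\mathcal{S}_{R/M}$ as a subsemigroup; chaining Lemmas \ref{lemma I(sub)< I(S)} and \ref{lemma I(im)< I(S)} gives $\textsc{I}({\rm U}(R/M))\le\textsc{I}(\mathcal{S}_R)<\infty$, whence $R/M$ is finite by Lemma \ref{lemma folklore}. Applying the same reasoning to ${\rm U}(R)\le\mathcal{S}_R$ shows ${\rm U}(R)$ is finite, and since $1_R+J(R)\subseteq{\rm U}(R)$ and $x\mapsto 1_R+x$ is a bijection, $|J(R)|<\infty$; I will need this last fact in the third step.

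Next, the bound on maximal ideals of index greater than two. Put $n=\textsc{I}(\mathcal{S}_R)$ and suppose for contradiction that $M_1,\ldots,M_n$ are distinct such ideals. By the previous step each $R/M_i$ is a finite field of order greater than $2$, hence contains a non-idempotent unit; pull this back to $b_i\in R$ with $b_i^2\not\equiv b_i\pmod{M_i}$, and use the Chinese Remainder Theorem to obtain $a_i\in R$ with $a_i\equiv b_i\pmod{M_i}$ and $a_i\equiv 1_R\pmod{M_j}$ for every $j\ne i$. Then for any nonempty $J\subseteq[1,n]$ and any $i\in J$ one has $\prod_{j\in J}a_j\equiv b_i\pmod{M_i}$, which is not idempotent modulo $M_i$ and hence not idempotent in $R$; so the length-$n$ sequence $(a_1,\ldots,a_n)$ is idempotent-product free, forcing $\textsc{I}(\mathcal{S}_R)\ge n+1$, a contradiction.

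Finally, ``infinitely many maximal ideals of index two.'' If there were only finitely many, then together with the two preceding steps $R$ would have only finitely many maximal ideals, all of finite index; the Chinese Remainder Theorem would give $R/J(R)\cong\prod_M R/M$ finite, and combined with $|J(R)|<\infty$ this makes $R$ finite, contradicting the hypothesis. There is no genuinely hard step here — the whole proof is a transcription of the relevant portions of the proof of Theorem \ref{theorem equivalent I(S) finite in rings} — and the only point calling for a line of care is the verification in the second step that the constructed sequence is idempotent-product free, i.e.\ the bookkeeping showing each of its subproducts lands on the non-idempotent $b_i$ modulo $M_i$.
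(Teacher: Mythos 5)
Your proof is correct, but it takes a different route from the one the paper actually prints for this proposition. You re-run the internal machinery of the proof of Theorem \ref{theorem equivalent I(S) finite in rings} --- your three steps are verbatim Claims A, B and C (plus the finiteness of $J(R)$ needed for the last step), each reduced to Lemma \ref{lemma folklore} via Lemmas \ref{lemma I(sub)< I(S)} and \ref{lemma I(im)< I(S)}, and your bookkeeping for the idempotent-product free sequence in the second step is the right verification. The paper explicitly acknowledges that this derivation ``from the arguments of Theorem \ref{theorem equivalent I(S) finite in rings}'' is available, but deliberately proves the proposition \emph{from the statement} of the theorem instead: it invokes the isomorphism $R\diagup J(R)\cong B\times\prod_{i=1}^t\mathbb{F}_{q_i}$, uses the cited facts that an infinite Boolean unitary ring has infinitely many maximal ideals all of index two, observes that every ideal of the product decomposes as a product of ideals of the factors, and then transports the count of maximal ideals back to $R$ through the correspondence induced by $J(R)$ being the intersection of all maximal ideals. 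Your approach is self-contained modulo the lemmas and avoids needing any facts about Boolean rings; the paper's approach is shorter given the theorem and makes the point that the structural classification already encodes all three assertions, i.e.\ that the theorem is the stronger statement. Both are valid; yours simply duplicates work that the theorem has already done.
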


We remark that Proposition \ref{prop moreinformation} can be derived from the arguments of Theorem \ref{theorem equivalent I(S) finite in rings}.  However, to show that Theorem \ref{theorem equivalent I(S) finite in rings} itself implies Proposition \ref{prop moreinformation}, we give a short proof here.

\begin{proof} \  By Theorem \ref{theorem equivalent I(S) finite in rings}, $R\diagup J(R)\cong B \times \prod\limits_{i=1}^t \mathbb{F}_{q_i}$, where $B$ is an infinite Boolean unitary ring,  $\mathbb{F}_{q_1},\ldots, \mathbb{F}_{q_t}$  ($t\geq 0$, $q_1,\ldots,q_t>2$) are finite fields. Note that infinite Boolean unitary ring $B$ has infinitely many maximal ideals, and each of the maximal ideals has index two (see \cite{Clark}, Proposition 9.4 and Proposition 9.6). Since $B$ has an identity, any ideal $K\triangleleft R\diagup J(R)$ must be of the form $K=K_0\times K_1\times \cdots\times K_t$ where $K_0\triangleleft B, K_1\triangleleft \mathbb{F}_{q_1},\ldots, K_t\triangleleft \mathbb{F}_{q_t}$. We derive that $R\diagup J(R)$ has infinitely many maximal ideals of index two and has exactly $t\leq \textsc{I}(\mathcal{S}_R)-1$ maximal ideals with index greater than two, in precise, with indices $q_1,\ldots,q_t>2$ respectively, and has no maximal ideals of infinite index, thus, so does the ring $R$, since $J(R)$ is the intersection of all maximal ideals of $R$.
\end{proof}

From Proposition  \ref{prop moreinformation}, we have the following immediately.

\begin{cor} \ If $R$ is a commutative semi-local unitary ring, i.e., $R$ has only finitely many maximal ideals.  Then $\textsc{I}(\mathcal{S}_R)$ is finite if and only if $R$ is finite.
\end{cor}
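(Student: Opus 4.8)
The plan is to obtain both implications from results already established, so that essentially no new argument is needed. For the direction ``$R$ finite $\Rightarrow \textsc{I}(\mathcal{S}_R)$ finite'', I would simply note that when $R$ is finite the multiplicative semigroup $\mathcal{S}_R$ is a finite commutative semigroup, whence the Gillam-Hall-Williams Theorem recalled in the introduction gives at once that $\textsc{I}(\mathcal{S}_R)$ exists and is bounded above by $|\mathcal{S}_R\setminus {\rm E}(\mathcal{S}_R)|+1$. No property of semi-locality is used for this half.

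For the converse, I would argue by contradiction. Suppose $\textsc{I}(\mathcal{S}_R)$ is finite but $R$ is infinite. Then $R$ is an infinite commutative unitary ring with finite Erd\H{o}s-Burgess constant, so Proposition \ref{prop moreinformation} applies: $R$ has infinitely many maximal ideals of index two. In particular $R$ possesses infinitely many maximal ideals, which contradicts the standing hypothesis that $R$ is semi-local. Hence $R$ must be finite, and the ``only if'' direction follows.

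Since the whole content is a direct application of Proposition \ref{prop moreinformation}, there is no real obstacle here; the only point to be careful about is that Proposition \ref{prop moreinformation} is stated for \emph{infinite} rings, so the contradiction step must explicitly assume $R$ infinite before invoking it. As an alternative route one could apply Theorem \ref{theorem equivalent I(S) finite in rings} directly, using that an infinite Boolean unitary ring $B$ (and hence $B\times\prod_{i=1}^t\mathbb{F}_{q_i}$, and hence $R$ itself, since $J(R)$ is the intersection of all maximal ideals of $R$ and $|J(R)|$ is finite) has infinitely many maximal ideals; but routing through Proposition \ref{prop moreinformation} is shorter, because that statement already packages the needed conclusion about the number of maximal ideals.
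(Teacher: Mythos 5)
Your proof is correct and follows the paper's own route: the paper derives this corollary immediately from Proposition \ref{prop moreinformation}, exactly as you do, since an infinite ring with finite Erd\H{o}s--Burgess constant must have infinitely many maximal ideals of index two, contradicting semi-locality. Your explicit note that Proposition \ref{prop moreinformation} applies only to infinite rings, and the Gillam--Hall--Williams direction, are both handled as the paper intends.
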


We conjecture that the conditions of Theorem \ref{theorem equivalent I(S) finite in rings} should be also sufficient. Hence, we close this paper with the following conjecture.

\begin{conj} \ \label{conjecture rings} Let $R$ be a commutative unitary ring with  $R\diagup J(R)\cong B \times \prod\limits_{i=1}^t \mathbb{F}_{q_i}$, where $B$ is an infinite Boolean unitary ring,   $\mathbb{F}_{q_1},\ldots, \mathbb{F}_{q_t}$  ($t\geq 0$) are finite fields, and the Jacobson radical $J(R)$ is finite. Then
$\textsc{I}(\mathcal{S}_R)$ is finite.
\end{conj}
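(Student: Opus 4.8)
The plan is to reduce the statement, in three stages, to a combinatorial lemma about square‑zero ideals; the proof of that lemma is the real content.

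\emph{Stage 1 (setup).} Reformulate the goal as a bound on the lengths of idempotent‑product free sequences over $R$. Write $N=J(R)$; by hypothesis $N$ is finite, hence, being a finitely generated $R$‑module with $N\cdot N^{k}=N^{k}$ for the eventually constant value $N^{k}$, Nakayama's lemma forces $N^{k}=(0_R)$. Thus $N$ is a finite \emph{nilpotent} ideal and we have a finite filtration $R\supseteq N\supseteq N^{2}\supseteq\cdots\supseteq N^{k}=(0_R)$.

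\emph{Stage 2 (reduction to square‑zero ideals, and the base case).} I would isolate the following Key Lemma: if $\widetilde R$ is a commutative unitary ring and $I\triangleleft\widetilde R$ is a finite ideal with $I^{2}=(0)$ such that $\textsc{I}(\mathcal S_{\widetilde R\diagup I})$ is finite, then $\textsc{I}(\mathcal S_{\widetilde R})$ is finite, with a bound depending only on $\textsc{I}(\mathcal S_{\widetilde R\diagup I})$ and $|I|$. Granting this, apply it repeatedly with $\widetilde R=R\diagup N^{j+1}$ and $I=N^{j}\diagup N^{j+1}$: indeed $I$ is finite, $I^{2}\subseteq N^{2j}\diagup N^{j+1}=(0)$ for $j\ge 1$, and $\widetilde R\diagup I\cong R\diagup N^{j}$, so finiteness of $\textsc{I}$ propagates from $R\diagup N$ up the filtration to $R\diagup N^{k}=R$. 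The base case $R\diagup N\cong B\times\prod_{i=1}^{t}\mathbb{F}_{q_i}$ is easy: $\mathcal S_{R/N}\cong\mathcal S_{B}\times\prod_i\mathcal S_{\mathbb{F}_{q_i}}$, every element of the Boolean ring $B$ is idempotent, so a subproduct of a sequence over $\mathcal S_{R/N}$ is idempotent as soon as its projection to the finite ring $\prod_i\mathbb{F}_{q_i}$ is; by the Gillam--Hall--Williams theorem this happens once the length reaches $\prod_i q_i$, whence $\textsc{I}(\mathcal S_{R/N})\le\prod_i q_i<\infty$.

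\emph{Stage 3 (proof of the Key Lemma).} Put $d=\textsc{I}(\mathcal S_{\widetilde R\diagup I})$. Given a long sequence $T=(a_1,\dots,a_\ell)$ over $\widetilde R$, reduce modulo $I$ and extract greedily, using $d$‑term windows, pairwise disjoint nonempty blocks $J_1,\dots,J_m\subseteq[1,\ell]$, each of size at most $d$, whose block products $p_i:=\prod_{j\in J_i}a_j$ are idempotent modulo $I$; then $m\ge\lfloor\ell/d\rfloor$. As $I$ is nil, lift each $\overline{p_i}$ to an idempotent $e_i\in\widetilde R$, so $p_i=e_i+x_i$ with $x_i\in I$. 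Using $I^{2}=(0)$, for every nonempty $K\subseteq[1,m]$ one computes
\[
\prod_{i\in K}p_i=\Big(\prod_{i\in K}e_i\Big)+\hat x_K,
\qquad
\hat x_K:=\sum_{i\in K}x_i\prod_{i'\in K\setminus\{i\}}e_{i'}\ \in I ,
\]
and since $1-2\prod_{i\in K}e_i$ is a unit, $\prod_{i\in K}p_i$ is idempotent if and only if $\hat x_K=0$. So it suffices to find a nonempty $K$ with $\hat x_K=0$: then $\prod_{i\in K}p_i=\prod_{i\in K}e_i$ is an idempotent subproduct of $T$ indexed by $\bigsqcup_{i\in K}J_i$, giving $\textsc{I}(\mathcal S_{\widetilde R})\le\ell$.

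The obstacle — the only place where one must genuinely use the hypotheses rather than a cardinality bound — is that $\widetilde R$ may have infinitely many idempotents, so one cannot pigeonhole on the $e_i$ directly. The fix is to pigeonhole on their action on the \emph{finite} module $I$: each $e_i$ acts as the idempotent endomorphism $\mu_i\colon y\mapsto e_iy$ of the finite abelian group $I$, and ${\rm End}_{\mathbb Z}(I)$ is finite. Pigeonholing on the pair $(x_i,\mu_i)\in I\times{\rm End}_{\mathbb Z}(I)$ and passing to a sub‑collection on which it is constant — say $x_i=x$ and $\mu_i=\mu$ for all $i$ in an index set of size at least $|I|$, which holds provided $m\ge|I|\cdot|I|^{|I|+1}$, i.e.\ $\ell\ge d\,|I|^{|I|+2}$ — one gets, for $|K|\ge 2$, that $\prod_{i'\in K\setminus\{i\}}e_{i'}$ acts on $I$ as $\mu^{|K|-1}=\mu$, so $x_i\prod_{i'\in K\setminus\{i\}}e_{i'}=\mu(x)=:w$ is independent of $i$ and $\hat x_K=|K|\,w$. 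Choosing $|K|$ to be a multiple of the additive order of $w$ (which divides $|I|$) with $|K|\ge 2$ gives $\hat x_K=0$. This yields an explicit bound of the shape $\textsc{I}(\mathcal S_{\widetilde R})\le d\,|I|^{|I|+2}$, completing the Key Lemma and hence the conjecture. I expect the only care needed beyond this to be routine: the bookkeeping in the greedy extraction and the verification that $N$ is nilpotent and that idempotents lift along the filtration.
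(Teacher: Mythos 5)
You should be aware that the paper does \emph{not} prove this statement: it is left as Conjecture \ref{conjecture rings}, and the only case the paper settles is $J(R)=(0_R)$, in the remark immediately following it, where $\textsc{I}(\mathcal{S}_R)=\textsc{I}(\mathcal{S}_{\prod_{i=1}^t\mathbb{F}_{q_i}})$ is observed to be finite. Your Stage 2 base case is exactly that remark's content. Everything beyond it --- the observation that a finite Jacobson radical is automatically nilpotent by Nakayama, the d\'evissage along $R\supseteq N\supseteq N^2\supseteq\cdots\supseteq N^k=(0_R)$, and above all the Key Lemma for a finite square-zero ideal --- is new relative to the paper, and after checking each step I do not see a gap. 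The critical points all hold: idempotents lift modulo the nil ideal $I$ via $e=3p^2-2p^3$ with $e-p=(2p-1)(p-p^2)\in I$ and $e^2-e\in I^2=(0)$; the expansion $\prod_{i\in K}(e_i+x_i)=\prod_{i\in K}e_i+\hat x_K$ is exact since every term with two or more $x$-factors dies in $I^2$; and the decisive idea --- that the possibly infinite supply of idempotents $e_i$ is irrelevant because only their action on the finite module $I$ enters the error term, so one can pigeonhole on $(x_i,\mu_i)\in I\times{\rm End}_{\mathbb Z}(I)$ and then kill $\hat x_K=|K|\,\mu(x)$ by choosing $|K|\geq 2$ divisible by the additive order of $\mu(x)$, which divides $|I|$ --- is exactly what quarantines the infinitude of $B$. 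This is where a naive attempt (pigeonholing on the $e_i$ themselves) would fail, and you correctly identified and repaired it.

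A few points should be tightened in a final write-up, none of which I believe is fatal. First, state the degenerate cases explicitly: if $I=(0)$ the Key Lemma is vacuous, and if $w=\mu(x)=0$ one takes $|K|=2$, which requires the constant subcollection to have at least two elements, i.e.\ $|I|\geq 2$. Second, the pigeonhole bookkeeping should be written as $m\geq(|I|-1)\cdot|I|\cdot|{\rm End}_{\mathbb Z}(I)|+1$ to guarantee a constant subcollection of size $|I|$; your stated threshold $|I|^{|I|+2}$ dominates this, so the bound $\textsc{I}(\mathcal S_{\widetilde R})\leq d\,|I|^{|I|+2}$ stands, but the inequality chain should be displayed. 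Third, in the equivalence ``$\prod_{i\in K}p_i$ is idempotent iff $\hat x_K=0$'' only the trivial ``if'' direction is used, so the appeal to the unit $1-2\prod_{i\in K}e_i$ can be dropped. Finally, since this would resolve an open conjecture of the paper, the argument deserves to be written out in full detail (in particular the greedy extraction of the disjoint blocks $J_1,\ldots,J_m$ from consecutive windows of length $d$, and the verification that $\bigsqcup_{i\in K}J_i$ indexes a genuine nonempty subproduct of $T$) and checked independently before being asserted.
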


\begin{remark} \ Note that if the ring $R$ has a zero Jacobson Radical $J(R)=(0_R)$ ($R$ is called Jacobson-semisimple), then $R\cong B \times \prod\limits_{i=1}^t \mathbb{F}_{q_i}$, it is not hard to show that $\textsc{I}(\mathcal{S}_R)=\textsc{I}(\mathcal{S}_{\prod\limits_{i=1}^t \mathbb{F}_{q_i}})$ which is finite and Conjecture \ref{conjecture rings} holds true.
\end{remark}

\noindent {\bf Acknowledgements}

\noindent
This work is supported by NSFC (grant no. 11971347, 11501561).

\end{document}